\documentclass[11pt]{article}

\usepackage[letterpaper,margin=1in]{geometry}
\usepackage{amsmath,amssymb,amsthm,mathtools}
\usepackage{microtype}
\usepackage{xcolor}
\usepackage[colorlinks=true,linkcolor=blue!60!black,citecolor=blue,urlcolor=blue!60!black]{hyperref}
\hypersetup{
  pdftitle={Fractional Triangle Decompositions at Partite Minimum Degree 4n/5},
  pdfauthor={Hengzhi He and Guang Cheng}
}

\newtheorem{theorem}{Theorem}[section]
\newtheorem{lemma}[theorem]{Lemma}
\newtheorem{corollary}[theorem]{Corollary}
\theoremstyle{definition}

\theoremstyle{remark}
\newtheorem{remark}[theorem]{Remark}

\newcommand{\cT}{\mathcal T}
\newcommand{\1}{\mathbf 1}
\newcommand{\whdelta}{\widehat{\delta}}

\allowdisplaybreaks

\title{Fractional Triangle Decompositions at Partite Minimum Degree \(4n/5\)}
\author{Hengzhi He\thanks{Department of Statistics and Data Science, University of California, Los Angeles.}
\qquad Guang Cheng\footnotemark[1]}
\date{August 2026}

\begin{document}

\maketitle

\begin{abstract}
We prove that every triangle-divisible balanced tripartite graph whose vertex
classes have size \(n\) and whose partite minimum degree is at least \(4n/5\)
has a fractional triangle decomposition.  Combined with the multipartite
decomposition theorem of Barber, K\"uhn, Lo, Osthus and Taylor, this implies
that, for every fixed \(\varepsilon<1/5\) and all sufficiently large \(n\),
every \(\varepsilon\)-dense partial Latin square of order \(n\) is completable.
The fractional theorem is finite and exact.  Its proof uses a minimum-weight
perfect matching to normalize an arbitrary Farkas dual weighting, followed by
an explicit direct-and-two-step routing whose off-matching congestion is at
most one.
\end{abstract}

\section{Introduction}

A partial Latin square of order \(n\) is called \(\varepsilon\)-dense if every
row and every column contains at most \(\varepsilon n\) filled cells and every
symbol occurs at most \(\varepsilon n\) times.  Daykin and H\"aggkvist
conjectured that every \(1/4\)-dense partial Latin square is
completable~\cite{DaykinHaggkvist}.  Wanless showed that the constant \(1/4\)
cannot be increased~\cite{Wanless}.  The conjecture therefore asks for the
sharp universal density threshold.
Thus $1/4$ is the largest possible constant in the conjectured universal
statement, although attainability at $1/4$ remains open.

The standard translation identifies a partial Latin square with a collection
of edge-disjoint triangles in \(K_{n,n,n}\).  Its completion is equivalent to
a triangle decomposition of the graph left after deleting those triangles.
Fractional triangle decompositions form the natural linear relaxation.  The
work of Bowditch and Dukes gave the asymptotic completion constant
\(1/25\)~\cite{BowditchDukes}.  Recently, Yu and Feng improved this to
\(2/25\)~\cite{YuFeng}.  Their result, like the one below, proceeds through a
fractional decomposition theorem for all triangle-divisible balanced
tripartite graphs, a class strictly larger than complements of partial Latin
squares.

For a tripartite graph \(G\) with vertex classes \(X,Y,Z\), define its
\emph{partite minimum degree} by
\[
 \whdelta(G)=
 \min\{d_G(v,W):U,W\in\{X,Y,Z\},\ U\neq W,\ v\in U\}.
\]
We prove the following finite statement.

\begin{theorem}\label{thm:main}
Let \(G\) be a triangle-divisible tripartite graph with vertex classes
\(X,Y,Z\), each of size \(n\).  If
\[
    \whdelta(G)\geq \frac{4n}{5},
\]
then \(G\) has a fractional triangle decomposition.
\end{theorem}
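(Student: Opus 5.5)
We argue by LP duality. By Farkas' lemma, \(G\) has no fractional triangle decomposition exactly when there is a weighting \(w:E(G)\to\mathbb R\) with \(w(T)=\sum_{e\in T}w(e)\ge 0\) for every triangle \(T\) of \(G\), but \(w(E(G))=\sum_e w(e)<0\). So we fix such a dual weighting \(w\) and aim for a contradiction. The idea is to write \(w(E(G))\) as a nonnegative combination of triangle weights \(w(T)\), plus a correction term that we can show is nonnegative.

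The first step uses triangle-divisibility: \(G\) has the same number \(m\) of edges between each pair of classes. We choose a structure on one pair, say a minimum-\(w\)-weight perfect matching \(M\) in the bipartite graph \(G[X,Y]\). Such a matching exists by Hall's theorem, since \(\whdelta\ge 4n/5>n/2\). Using \(M\) we normalize \(w\). Adding \(\phi(x)\) to all edges at \(x\in X\) on one side, and adjusting edges at \(Z\) compatibly, preserves \(w(T)\) for every triangle and shifts \(w(E)\) only by degree-weighted sums. Because the degrees are not all equal, this shift has to be tracked exactly. Triangle-divisibility, the equal edge counts per pair, is what makes the bookkeeping close. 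Minimality of \(M\) gives LP dual potentials \(p,q\) on \(X,Y\) with \(w(xy)\ge p(x)+q(y)\), with equality on \(M\) (Hungarian / König–Egerváry duality). After subtracting these potentials, the \(XY\) edges have nonnegative weight and \(M\) has weight zero.

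Next comes the routing. For each \(XZ\) and \(YZ\) edge we assign triangles so that every non-\(XY\) edge is covered exactly once in total. Then \(w(E)\) equals \(\sum_T \lambda_T w(T)\) plus the leftover weight on the \(XY\) edges, which must be absorbed with the correct sign. The natural routing sends edge \(xz\) to the \(M\)-partner \(y=M(x)\) through the triangle \(xyz\) (the direct route) when \(yz\in E\). Otherwise it uses a two-step route through an alternating configuration \(x\,z\,y'\,x'\) with \(x'y'\in M\), using one off-matching \(XY\) edge. The degree condition \(4n/5\) is what should make this work. By inclusion–exclusion, \(z\) has at least \(3n/5\) neighbours \(y'\in Y\) whose matched partners are also neighbours, and the direct routes fail for at most \(n/5\) cases per vertex. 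Both counts need fractional averaging. So we spread each failed direct route uniformly over the available two-step routes, and aim to show that every off-matching \(XY\) edge receives total congestion at most one. Because those edges have nonnegative normalized weight and appear with nonpositive coefficient in the identity, congestion at most one makes the leftover term nonnegative. This gives \(w(E)\ge 0\), a contradiction.

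The main obstacle is the congestion bound. It requires an exact double count: a given off-matching edge \(x'y\) is charged by pairs \((x,z)\), and the number of such pairs divided by the number of alternatives available to each must come out at most \(1\). The constants \(n/5\) for failures and \(3n/5\) for alternatives have to multiply out precisely at the threshold \(4/5\). We would first try uniform fractional spreading and check the inequality \((n-\whdelta)\cdot n/(2\whdelta-n)\le \whdelta\)-type bounds. If these are not tight enough, we would weight the two-step routes by the inverse local availability to equalize the load.
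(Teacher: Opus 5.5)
\textbf{There is a genuine gap: the congestion bound, which is the whole content of the theorem at $4/5$, is not proved, and the routing you sketch is not yet a valid link matching.} Your framework is the paper's: Farkas duality, then a minimum-weight perfect matching $M$ in $G[X,Y]$ whose assignment potentials make every $XY$ weight nonnegative and every $M$-edge zero. Then, for each $z\in Z$, you want a fractional perfect matching of the indices $u$ with $x_uz\in E(G)$ to the indices $v$ with $y_vz\in E(G)$, supported on edges $x_uy_v$, with aggregate load at most one on off-matching edges.

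A small correction on the gauge step: it needs the per-vertex identities $d_G(x,Y)=d_G(x,Z)$ and $d_G(y,X)=d_G(y,Z)$. For arbitrary potentials the total shifts by $\sum_x\pi_x(d_G(x,Z)-d_G(x,Y))+\sum_y\rho_y(d_G(y,Z)-d_G(y,X))$, so equal edge counts per pair would not suffice.

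The real problem is the step you call the main obstacle. There you offer no routing and no estimate, only a plan to try uniform spreading and otherwise reweight.

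Write $q=n-\whdelta(G)$, and let $R_z$, $S_z$, $C_z$ be the indices with, respectively, only $x_\cdot z$, only $y_\cdot z$, or both edges to $z$ present. Routing $x_rz$ through $x_ry_cz$ with $c\in C_z$ takes mass off the loop at $c$. That mass must be forwarded from $x_c$ to some $y_s$ with $s\in S_z$. So the two-step route $r\to c\to s$ uses two off-matching edges, not one. The intermediates form $T(r,s)=\{c\in C_z: x_ry_c,\ x_cy_s\in E(G)\}$, whose guaranteed size is only $n-4q$, not $3n/5$.

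Uniform spreading through $C_z$ alone does not reach $4n/5$. Its per-role estimate is $1/(n-3q-d)$, which can be as bad as $1/(n-4q)$. An edge can serve in up to $q$ first-leg and $q$ second-leg roles, and $2q/(n-4q)\le1$ requires $q\le n/6$. So that estimate closes only at $5n/6$.

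The missing ideas are these.
\begin{itemize}
\item[(i)] In each colour, put a fixed direct mass $1/q$, independent of $k=|R_z|$, on every available edge $x_ry_s$ with $r\in R_z$, $s\in S_z$. Route only the residual, $1/k-1/q$ or $1/k$, through $T(r,s)$.
\item[(ii)] Use the refined bound $|T(r,s)|\ge n-3q-d$, where $d$ is the number of direct edges from $x_r$ into $\{y_s:s\in S_z\}$. Together with the residual total $1-d/q$, this gives per-role load at most $1/(n-3q)$.
\item[(iii)] Use a three-role count. Let $D,F,H$ count the colours in which $x_uy_v$ is direct, a first leg, or a second leg. Then $D+F\le q$ and $D+H\le q$, because $u\in R_z$ forces $y_uz\notin E(G)$ and $v\in S_z$ forces $x_vz\notin E(G)$. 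Hence the total load is at most $D/q+(2q-2D)/(n-3q)\le1$, which holds exactly when $n\ge5q$.
\end{itemize}
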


\begin{corollary}\label{cor:latin}
For every fixed \(0<\varepsilon<1/5\), every \(\varepsilon\)-dense partial Latin
square of sufficiently large order is completable.
\end{corollary}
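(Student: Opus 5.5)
We derive the corollary from Theorem~\ref{thm:main} by the standard translation to triangle decompositions, together with the multipartite decomposition theorem of Barber, K\"uhn, Lo, Osthus and Taylor. The plan is to show that the complement of an \(\varepsilon\)-dense partial Latin square satisfies the hypotheses of Theorem~\ref{thm:main} with room to spare. The decomposition theorem then upgrades the resulting fractional decomposition to a genuine triangle decomposition.

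\emph{Step 1 (translation).} Let \(L\) be an \(\varepsilon\)-dense partial Latin square of order \(n\) with \(0<\varepsilon<1/5\). Take vertex classes \(X\) (rows), \(Y\) (columns) and \(Z\) (symbols), each of size \(n\). Let \(G\) be obtained from \(K_{n,n,n}\) by deleting the edges \(xy,xz,yz\) of every triangle \(xyz\) with symbol \(z\) in cell \((x,y)\) of \(L\). These triangles are edge-disjoint because \(L\) is a partial Latin square. A triangle decomposition of \(G\) together with the deleted triangles decomposes \(K_{n,n,n}\), and this is exactly a completion of \(L\). So it suffices to decompose \(G\).

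\emph{Step 2 (hypotheses).} Each row vertex \(x\) loses exactly \(f_x\) neighbours in \(Y\) and exactly \(f_x\) in \(Z\), where \(f_x\) is the number of filled cells in row \(x\). Hence \(d_G(x,Y)=d_G(x,Z)=n-f_x\), and the same holds for columns and symbols. Thus \(G\) is balanced and triangle-divisible. Since every \(f_x\leq\varepsilon n\), we get
\[
\whdelta(G)\geq(1-\varepsilon)n.
\]
Fix \(\gamma>0\) with \(1-\varepsilon\geq 4/5+2\gamma\), which is possible because \(\varepsilon<1/5\). Then \(\whdelta(G)\geq(4/5+2\gamma)n\).

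\emph{Step 3 (fractional to exact).} The theorem of Barber, K\"uhn, Lo, Osthus and Taylor (in the triangle case) says the following. Suppose \(\delta^*\) is such that every balanced tripartite \(G'\) on classes of size \(n\) with \(\whdelta(G')\geq\delta^* n\) has a fractional triangle decomposition. Then for every \(\gamma>0\) there is \(n_0\) such that every triangle-divisible balanced tripartite \(G\) with \(n\geq n_0\) and \(\whdelta(G)\geq(\delta^*+\gamma)n\) has a triangle decomposition. By Theorem~\ref{thm:main}, we may take \(\delta^*=4/5\) among triangle-divisible graphs. Applying the decomposition theorem to the graph \(G\) of Step~2, for \(n\geq n_0(\gamma)\), gives the triangle decomposition and hence the completion.

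\emph{Main obstacle.} The main obstacle is matching the exact form of the fractional hypothesis required by the decomposition theorem. That hypothesis may be an \(\eta\)-approximate or ``robust'' fractional threshold, or may be stated for all (not necessarily divisible) graphs, or via a limit/infimum definition. The remedy is that Theorem~\ref{thm:main} is exact and holds for every \(n\), so any version defined as an infimum over large \(n\) is at most \(4/5\). If the theorem is stated for the \(\eta\)-fractional threshold, we would check that the absorber-based proof only applies the fractional input to triangle-divisible subgraphs \(G''\) of \(G\). These subgraphs are obtained by deleting a sparse structure, so they still satisfy \(\whdelta(G'')\geq(4/5+\gamma)n\). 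Theorem~\ref{thm:main} therefore applies to them directly, and the slack \(\gamma\) absorbs all error terms.
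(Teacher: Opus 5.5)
Your proposal is correct and matches the paper's proof essentially step for step: you delete the prescribed triangles from $K_{n,n,n}$, check triangle-divisibility and $\whdelta(G_P)\geq(1-\varepsilon)n$, and invoke the Barber--K\"uhn--Lo--Osthus--Taylor decomposition theorem with the fractional threshold $\widehat\delta^*_{K_3}\leq 4/5$ supplied by Theorem~\ref{thm:main}. The paper cites their Corollary~1.6, which is already stated in terms of the fractional threshold for triangle-divisible balanced tripartite graphs, so Theorem~\ref{thm:main} plugs in directly and the fallback in your ``main obstacle'' paragraph is not needed.
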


The proof is dual.  A minimum-weight perfect matching in one bipartite pair
puts an arbitrary Farkas dual weighting into a gauge in which all weights on
that pair are nonnegative and the matching edges have weight zero.  For every
vertex in the third class, we then construct a fractional link matching.
Available source--sink arcs receive direct mass \(1/q\), where \(q\) is the
maximum degree deficiency, and the residual mass is routed in two steps.  A
refined one-role estimate and an exact three-role count show that the total
load on every edge outside the distinguished matching is at most one.  All
estimates are deterministic and exact.

Section~\ref{sec:dual} gives the dual reduction.  Section~\ref{sec:routing}
constructs and audits the routing, and Section~\ref{sec:completion} deduces
Corollary~\ref{cor:latin}.

\section{Dual reduction}\label{sec:dual}

Let \(G\) be a tripartite graph with parts \(X,Y,Z\).  It is
\emph{triangle-divisible} if
\[
\begin{aligned}
 d_G(x,Y)&=d_G(x,Z) &&(x\in X),\\
 d_G(y,X)&=d_G(y,Z) &&(y\in Y),\\
 d_G(z,X)&=d_G(z,Y) &&(z\in Z).
\end{aligned}
\]
A \emph{fractional triangle decomposition} of \(G\) is a family
\((w_T)_{T\in\cT(G)}\) such that \(w_T\geq0\) and
\[
    \sum_{\substack{T\in\cT(G)\\e\in E(T)}}w_T=1
    \qquad\text{for every }e\in E(G).
\]

Write \(a_{xy}\), \(b_{yz}\), and \(c_{zx}\) for real weights on the edges of
\(G[X,Y]\), \(G[Y,Z]\), and \(G[Z,X]\), respectively, and put
\[
    A=\sum_{xy\in E(G[X,Y])}a_{xy},\qquad
    B=\sum_{yz\in E(G[Y,Z])}b_{yz},\qquad
    C=\sum_{zx\in E(G[Z,X])}c_{zx}.
\]

\begin{lemma}[Dual criterion]\label{lem:farkas}
The graph \(G\) has a fractional triangle decomposition if and only if every
edge weighting satisfying
\begin{equation}\label{eq:triangle-dual}
    a_{xy}+b_{yz}+c_{zx}\geq0
    \qquad\text{for every }xyz\in\cT(G)
\end{equation}
also satisfies \(A+B+C\geq0\).
\end{lemma}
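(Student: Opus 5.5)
This is a direct application of Farkas' lemma to the feasibility system defining a fractional triangle decomposition. Let \(M\) be the edge--triangle incidence matrix of \(G\). Its rows are indexed by \(E(G)\) and its columns by \(\cT(G)\), with \(M_{e,T}=1\) when \(e\in E(T)\) and \(0\) otherwise. A fractional triangle decomposition is exactly a vector \(w\in\mathbb R^{\cT(G)}\) with \(w\ge 0\) and \(Mw=\1\). We will invoke the standard form of Farkas' lemma. It says that exactly one of the following holds:
\begin{itemize}
\item there is \(w\ge0\) with \(Mw=\1\);
\item there is \(u\in\mathbb R^{E(G)}\) with \(M^{\top}u\ge 0\) and \(\1^{\top}u<0\).
\end{itemize}

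The next step is to translate the second alternative into the notation of the lemma. Every edge of \(G\) lies in exactly one of \(G[X,Y]\), \(G[Y,Z]\), \(G[Z,X]\). So a vector \(u\in\mathbb R^{E(G)}\) is the same thing as a triple of weightings \((a,b,c)\) on these three bipartite graphs. Every triangle \(T=xyz\in\cT(G)\) has exactly one edge in each of the three pairs. Hence \((M^{\top}u)_T=a_{xy}+b_{yz}+c_{zx}\), and the condition \(M^{\top}u\ge0\) is precisely \eqref{eq:triangle-dual}. Likewise \(\1^{\top}u=A+B+C\).

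With this dictionary the two directions follow. Suppose a decomposition \(w\) exists and \((a,b,c)\) satisfies \eqref{eq:triangle-dual}. Double counting gives
\[
A+B+C=\1^{\top}u=(Mw)^{\top}u=w^{\top}M^{\top}u=\sum_{T}w_T\,(a_{xy}+b_{yz}+c_{zx})\ge0 .
\]
Conversely, suppose no decomposition exists. Farkas' lemma then supplies a weighting satisfying \eqref{eq:triangle-dual} with \(A+B+C<0\), which contradicts the hypothesis.

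There is no genuine obstacle; the only point needing care is to use the version of Farkas' lemma for the equality system \(Mw=\1\) with \(w\ge0\). That version is the one with unrestricted-sign dual variables, and it is what allows the weights \(a,b,c\) to be arbitrary reals. Two degenerate cases should also be checked against the statement. If \(G\) has no edges, both sides hold trivially. If some edge lies in no triangle, the criterion correctly fails: put weight \(-1\) on that edge and \(0\) elsewhere. This weighting violates no instance of \eqref{eq:triangle-dual} and gives \(A+B+C<0\).
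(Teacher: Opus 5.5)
Your proof is correct and is essentially the paper's argument: Farkas' lemma for the equality system $Ww=\mathbf{1}$, $w\ge 0$ on the edge--triangle incidence matrix, with the sign-unrestricted dual vector read as the weighting $(a,b,c)$, whose triangle sums and total sum $A+B+C$ match the two Farkas conditions. Your explicit double-counting for the easy direction and your checks of the degenerate cases go beyond the paper's write-up and are valid, though not needed.
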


\begin{proof}
Let \(W\) be the edge--triangle incidence matrix of \(G\), with rows indexed by
edges and columns indexed by triangles.  A fractional triangle decomposition
is a vector \(w\geq0\) satisfying \(Ww=\1\).  Farkas' alternative says that
such a vector exists if and only if
\[
    y^{\mathsf T}W\geq0\quad\Longrightarrow\quad
    y^{\mathsf T}\1\geq0
\]
for every real vector \(y\) indexed by the edges.  The coordinates of
\(y^{\mathsf T}W\) are precisely the three-edge sums in
\eqref{eq:triangle-dual}, while \(y^{\mathsf T}\1=A+B+C\).
\end{proof}

We next normalize an arbitrary weighting satisfying
\eqref{eq:triangle-dual}.  The normalization is the only point at which
triangle-divisibility is used.

\begin{lemma}[Matching gauge]\label{lem:gauge}
Suppose that \(G\) is triangle-divisible, the three vertex classes have size
\(n\), and \(\whdelta(G)\geq n/2\).  For every weighting satisfying
\eqref{eq:triangle-dual}, there is a perfect matching
\[
    M=\{x_vy_v:v\in V\}\subseteq E(G[X,Y]),\qquad |V|=n,
\]
and an equivalent weighting, with the same triangle sums and the same total
sum \(A+B+C\), such that
\begin{equation}\label{eq:gauge}
    a_{xy}\geq0\quad(xy\in E(G[X,Y])),
    \qquad a_{x_vy_v}=0\quad(v\in V).
\end{equation}
\end{lemma}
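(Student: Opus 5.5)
The plan is to realise the new weighting by a vertex-potential gauge on \(X\cup Y\), and to obtain the potentials from linear programming duality for minimum-weight perfect matchings in the bipartite graph \(G[X,Y]\).

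\emph{Step 1: the gauge moves.} Fix real numbers \(p_x\) (\(x\in X\)) and \(q_y\) (\(y\in Y\)), and define
\[
 a'_{xy}=a_{xy}+p_x+q_y,\qquad b'_{yz}=b_{yz}-q_y,\qquad c'_{zx}=c_{zx}-p_x .
\]
Every triangle \(xyz\in\cT(G)\) contains exactly one edge at \(x\) in each of \(G[X,Y]\) and \(G[Z,X]\). It likewise contains exactly one edge at \(y\) in each of \(G[X,Y]\) and \(G[Y,Z]\). Hence \(a'_{xy}+b'_{yz}+c'_{zx}=a_{xy}+b_{yz}+c_{zx}\), so all triangle sums are preserved and \eqref{eq:triangle-dual} still holds. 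The total changes by
\[
 \sum_{x}p_x\bigl(d_G(x,Y)-d_G(x,Z)\bigr)+\sum_y q_y\bigl(d_G(y,X)-d_G(y,Z)\bigr)=0,
\]
by triangle-divisibility. This is the only use of divisibility.

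\emph{Step 2: existence of a perfect matching.} The graph \(G[X,Y]\) is a balanced bipartite graph with minimum degree at least \(n/2\). Hall's condition holds by the standard argument. Take \(S\subseteq X\) with \(|S|\le n/2\); any single vertex of \(S\) already has at least \(n/2\geq|S|\) neighbours. Now take \(S\subseteq X\) with \(|S|>n/2\). Every \(y\in Y\) has more than \(n-|S|\) neighbours outside \(X\setminus S\)? More precisely, \(y\) has at least \(n/2>n-|S|\) neighbours in \(X\), so it has a neighbour in \(S\), and \(N(S)=Y\). So a perfect matching exists.

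\emph{Step 3: choosing the potentials.} Let \(M\) be a perfect matching of \(G[X,Y]\) minimising \(\sum_{xy\in M}a_{xy}\). Consider the bipartite perfect matching polytope, whose vertices are integral (Birkhoff/König). Its LP dual is to maximise \(\sum_x u_x+\sum_y v_y\) subject to \(u_x+v_y\le a_{xy}\) for all \(xy\in E(G[X,Y])\). By strong duality and complementary slackness there are reals \(u,v\) satisfying these constraints, with equality on every edge of the optimal matching \(M\); concretely, these are the final potentials of the Hungarian algorithm. Now set \(p_x=-u_x\) and \(q_y=-v_y\). Then \(a'_{xy}=a_{xy}-u_x-v_y\ge0\) on all edges, and \(a'_{xy}=0\) on \(M\). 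Writing \(M=\{x_vy_v:v\in V\}\) gives \eqref{eq:gauge}.

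The only step needing care is Step 3, namely the integrality and complementary slackness for bipartite matching. It is standard, so I expect no genuine obstacle. Note that the hypothesis \eqref{eq:triangle-dual} is not needed for the normalisation itself; it is merely preserved by it.
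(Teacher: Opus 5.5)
Your proposal is correct and follows the paper's proof essentially step for step. The steps match: Hall's condition from \(\whdelta(G)\geq n/2\), then a minimum-weight perfect matching \(M\) with assignment-LP dual potentials tight on \(M\), then the potential shift \(a_{xy}\mapsto a_{xy}-u_x-v_y\), \(b_{yz}\mapsto b_{yz}+v_y\), \(c_{zx}\mapsto c_{zx}+u_x\), whose effect on \(A+B+C\) vanishes by triangle-divisibility (your \(p,q\) are just the paper's \(-\pi,-\rho\)). The only blemish is the self-correcting sentence in Step~2, which you should replace with the clean statement that each \(y\in Y\) has at least \(n/2>n-|S|\) neighbours in \(X\), hence one in \(S\).
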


\begin{proof}
First, \(G[X,Y]\) has a perfect matching.  
Indeed, Hall's condition is immediate when $S=\varnothing$. If
$S\subseteq X$ and $1\leq |S|\leq n/2$, choose $x\in S$. Then
\[
    |N(S)|\geq d_G(x,Y)\geq n/2\geq |S|.
\]
If \(|S|>n/2\) and \(N(S)\neq Y\), then a vertex
of \(Y\setminus N(S)\) has degree at most \(n-|S|<n/2\), a contradiction.
Hall's theorem applies.

Among all perfect matchings in \(G[X,Y]\), choose one, \(M\), minimizing the
sum of its \(a\)-weights.  The linear programming dual of the assignment
problem gives numbers \(\pi_x\) for \(x\in X\) and \(\rho_y\) for \(y\in Y\) such
that
\begin{equation}\label{eq:assignment-dual}
    \pi_x+\rho_y\leq a_{xy}\quad(xy\in E(G[X,Y])),
    \qquad \pi_x+\rho_y=a_{xy}\quad(xy\in M).
\end{equation}
For completeness, this follows by applying finite-dimensional linear
programming duality to
\[
 \min\left\{\sum_{xy\in E(G[X,Y])}a_{xy}m_{xy}:
 \sum_y m_{xy}=1,\ \sum_xm_{xy}=1,\ m_{xy}\geq0\right\};
\]
the bipartite matching polytope is integral, so its optimum is the chosen
matching, and complementary slackness gives equality on \(M\).

Define
\[
 a'_{xy}=a_{xy}-\pi_x-\rho_y,\qquad
 b'_{yz}=b_{yz}+\rho_y,\qquad
 c'_{zx}=c_{zx}+\pi_x.
\]
Every triangle sum is unchanged, and \eqref{eq:assignment-dual} gives
\eqref{eq:gauge} for \(a'\).  Moreover,
\[
\begin{split}
 A'+B'+C'-(A+B+C)
 ={}&\sum_{x\in X}\pi_x\bigl(d_G(x,Z)-d_G(x,Y)\bigr)\\
 &+\sum_{y\in Y}\rho_y\bigl(d_G(y,Z)-d_G(y,X)\bigr)=0,
\end{split}
\]
by triangle-divisibility.  Dropping the primes proves the lemma.
\end{proof}

Fix the indexing supplied by \(M\).  Define a directed graph \(D_M\) on \(V\)
by
\begin{equation}\label{eq:host}
    u\longrightarrow v\quad\Longleftrightarrow\quad x_uy_v\in E(G).
\end{equation}
Every loop is present.  If \(\whdelta(G)\geq n-q\), then every vertex of
\(D_M\) has at most \(q\) missing out-neighbors and at most \(q\) missing
in-neighbors.

For \(z\in Z\), set
\[
 I_z=\{v\in V:x_vz\notin E(G)\},\qquad
 K_z=\{v\in V:y_vz\notin E(G)\}.
\]
Triangle-divisibility at \(z\) gives \(|I_z|=|K_z|\leq q\).  A
\emph{fractional link matching at \(z\)} is a nonnegative matrix
\[
 \lambda^z=(\lambda^z_{uv})_
 {u\in V\setminus I_z,\ v\in V\setminus K_z}
\]
such that
\begin{equation}\label{eq:link}
\begin{gathered}
 \lambda^z_{uv}=0\quad\text{unless }u\longrightarrow v\text{ in }D_M,\\
 \sum_{v\in V\setminus K_z}\lambda^z_{uv}=1
 \quad(u\in V\setminus I_z),\qquad
 \sum_{u\in V\setminus I_z}\lambda^z_{uv}=1
 \quad(v\in V\setminus K_z).
\end{gathered}
\end{equation}
We extend \(\lambda^z_{uv}\) by zero whenever \(u\in I_z\) or \(v\in K_z\).

\begin{lemma}[Congestion criterion]\label{lem:criterion}
Under the hypotheses and notation established above, suppose that for
every $z\in Z$ there is a fractional link matching $\lambda^z$ such that
\begin{equation}\label{eq:congestion}
    \sum_{z\in Z}\lambda^z_{uv}\leq1
    \qquad\text{for every distinct }u,v\in V.
\end{equation}
Then \(G\) has a fractional triangle decomposition.
\end{lemma}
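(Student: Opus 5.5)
The plan is to verify the Farkas criterion of Lemma~\ref{lem:farkas}. Fix an arbitrary weighting \((a,b,c)\) satisfying \eqref{eq:triangle-dual}; we must show \(A+B+C\geq0\). By Lemma~\ref{lem:gauge} we may replace it by an equivalent weighting with the same triangle sums and the same total, and with respect to the perfect matching \(M=\{x_vy_v\}\) it satisfies \eqref{eq:gauge}. We use this same \(M\) to define \(D_M\), \(I_z\), \(K_z\), and the link matchings \(\lambda^z\) supplied by the hypothesis. (We read the hypotheses as quantified after this gauge choice, which is how the later construction will produce them.)

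Fix \(z\in Z\). Suppose \(u\in V\setminus I_z\), \(v\in V\setminus K_z\) and \(\lambda^z_{uv}>0\). Then \(u\to v\) in \(D_M\), so \(x_uy_v\in E(G)\). Also \(x_uz\in E(G)\) and \(y_vz\in E(G)\), so \(x_uy_vz\in\cT(G)\). Hence \eqref{eq:triangle-dual} gives \(a_{x_uy_v}+b_{y_vz}+c_{zx_u}\geq0\). Multiply by \(\lambda^z_{uv}\geq0\) and sum over all such pairs \((u,v)\).

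Next use the marginal conditions in \eqref{eq:link}. Each \(u\in V\setminus I_z\) has total mass \(1\), and \(\{x_u:u\in V\setminus I_z\}\) is exactly \(N_G(z)\cap X\). So the \(c\)-terms contribute exactly \(\sum_{x\in N(z)\cap X}c_{zx}\). Symmetrically, the \(b\)-terms contribute \(\sum_{y\in N(z)\cap Y}b_{yz}\). Therefore
\[
 \sum_{x\in N(z)\cap X}c_{zx}+\sum_{y\in N(z)\cap Y}b_{yz}
 +\sum_{u,v}\lambda^z_{uv}\,a_{x_uy_v}\;\geq\;0 .
\]

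Now sum over \(z\in Z\). Each edge of \(G[Y,Z]\) and of \(G[Z,X]\) appears exactly once, giving
\[
B+C+\sum_{u,v}\Bigl(\sum_{z}\lambda^z_{uv}\Bigr)a_{x_uy_v}\geq0 ,
\]
where only pairs with \(x_uy_v\in E(G)\) carry mass. The diagonal terms \(u=v\) vanish because \(a_{x_vy_v}=0\) by \eqref{eq:gauge}. For \(u\neq v\) we have \(a_{x_uy_v}\geq0\), and by \eqref{eq:congestion} the coefficient is at most \(1\). Since \(v\mapsto y_v\) is a bijection, distinct pairs \((u,v)\) give distinct edges of \(G[X,Y]\). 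Hence the \(a\)-part is at most \(\sum_{xy}a_{xy}=A\), so \(A+B+C\geq0\). Lemma~\ref{lem:farkas} then gives the fractional decomposition.

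The main point of care is the bookkeeping. We must check that the supports \(V\setminus I_z\) and \(V\setminus K_z\) index exactly the neighbourhoods of \(z\), so that the \(b\)- and \(c\)-sums are exact equalities rather than inequalities. We must also check that the sign conventions of the gauge make the congestion bound usable: nonnegativity off \(M\), and zero weight on \(M\), where the load is uncontrolled.
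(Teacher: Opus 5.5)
Your proof is correct and follows the paper's argument. You gauge via Lemma~\ref{lem:gauge}, weight each triangle inequality for $x_uy_vz$ by $\lambda^z_{uv}$, use the row and column sums in \eqref{eq:link} to recover $B+C$ exactly, and bound the $a$-part by $A$ using \eqref{eq:gauge} and \eqref{eq:congestion}; your remark that the $\lambda^z$ must be taken with respect to the matching $M$ produced by the gauge (which the Section~3 construction allows, since it works for any perfect matching) makes explicit a quantifier order the paper leaves implicit.
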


\begin{proof}
Take an edge weighting satisfying \eqref{eq:triangle-dual}, and apply
Lemma~\ref{lem:gauge}.  For each \(z\), multiply the inequality for the
triangle \(x_uy_vz\) by \(\lambda^z_{uv}\) and sum over all \(u,v,z\).
The row and column equations in \eqref{eq:link} count every edge in
\(G[Z,X]\) and \(G[Y,Z]\) exactly once.  Hence
\[
 0\leq
 \sum_{\substack{u,v\in V\\u\to v}}
       a_{x_uy_v}\sum_{z\in Z}\lambda^z_{uv}+B+C.
\]
The terms with \(u=v\) vanish by \eqref{eq:gauge}.  For \(u\neq v\), the
coefficient is at most one by \eqref{eq:congestion}, and
\(a_{x_uy_v}\geq0\).  Consequently the right-hand side is at most
\(A+B+C\).  Lemma~\ref{lem:farkas} now proves the assertion.
\end{proof}

\section{Adaptive routing}\label{sec:routing}

We construct the link matchings required by Lemma~\ref{lem:criterion}.  Put
\[
    q=n-\whdelta(G).
\]
Under the hypothesis of Theorem~\ref{thm:main}, \(q\leq n/5\).  The case
\(q=0\) is immediate, so assume \(q>0\).

Fix \(z\in Z\), suppress the subscript \(z\), and define
\[
 R=K\setminus I,\qquad S=I\setminus K,\qquad
 C=V\setminus(I\cup K).
\]
Then
\[
    |R|=|S|=:k,\qquad V\setminus I=R\cup C,\qquad
    V\setminus K=S\cup C.
\]
If \(k=0\), set \(\lambda^z_{cc}=1\) for \(c\in C\) and all other entries
equal to zero.  Assume henceforth that \(k>0\).  For \(r\in R\) and
\(s\in S\), let
\begin{equation}\label{eq:T}
    T(r,s)=\{c\in C:r\to c\text{ and }c\to s\}.
\end{equation}
Since \(|C|\geq n-2q\) and each of the two arc conditions excludes at most
\(q\) vertices,
\begin{equation}\label{eq:T-coarse}
    |T(r,s)|\geq n-4q>0.
\end{equation}

Reserve mass \(1/k\) for every ordered pair \((r,s)\in R\times S\).  If
\(r\to s\), route an amount $1/q$ directly and put
\[
    \beta_{rs}=\frac1k-\frac1q.
\]
If \(r\not\to s\), put \(\beta_{rs}=1/k\).  In both cases
\(\beta_{rs}\geq0\), because \(k\leq q\).  Distribute the residual mass
\(\beta_{rs}\) uniformly through \(T(r,s)\).  More precisely, initialize all
entries to zero, add
\[
\begin{array}{ll}
 1/q\text{ to }\lambda^z_{rs},&\text{if }r\to s,\\[2mm]
 \displaystyle\frac{\beta_{rs}}{|T(r,s)|}
 \text{ to each of }\lambda^z_{rc}\text{ and }\lambda^z_{cs},
 &\text{for every }c\in T(r,s),
\end{array}
\]
and finally set
\begin{equation}\label{eq:loop-fill}
  t(c)=\sum_{\substack{r\in R,\ s\in S\\c\in T(r,s)}}
       \frac{\beta_{rs}}{|T(r,s)|},
  \qquad \lambda^z_{cc}=1-t(c)\quad(c\in C).
\end{equation}

\begin{lemma}\label{lem:local}
The matrix \(\lambda^z\) defined above is a fractional link matching.
\end{lemma}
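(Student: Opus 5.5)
The plan is to check the conditions in \eqref{eq:link} directly from the construction: the support condition, the row and column sums, and nonnegativity. The sums work out by bookkeeping once we track which entries receive mass. Nonnegativity is the one place where the degree hypothesis \(q\le n/5\) enters.

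\emph{Support and index sets.} Every entry that receives mass has its row in \(R\cup C=V\setminus I\) and its column in \(S\cup C=V\setminus K\), so \(\lambda^z\) has the required shape. Each entry is also supported on an arc of \(D_M\):
\begin{itemize}
\item the direct mass \(1/q\) is placed on \(\lambda^z_{rs}\) only when \(r\to s\);
\item the two-step masses on \(\lambda^z_{rc}\) and \(\lambda^z_{cs}\) use \(c\in T(r,s)\), so \(r\to c\) and \(c\to s\) by \eqref{eq:T};
\item the loop entries \(\lambda^z_{cc}\) are allowed because every loop is present in \(D_M\).
\end{itemize}
Since \(R,S,C\) are pairwise disjoint, no entry \(\lambda^z_{rs}\), \(\lambda^z_{rc}\), \(\lambda^z_{cs}\) is a diagonal entry. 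So the only diagonal entries are the \(\lambda^z_{cc}\), and no entry is assigned in two incompatible ways.

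\emph{Row and column sums.} Fix \(r\in R\). For each \(s\in S\), the mass leaving \(r\) on behalf of the pair \((r,s)\) is either \(1/q+\beta_{rs}=1/k\) (if \(r\to s\)) or \(\beta_{rs}=1/k\) (otherwise). The residual part is spread over the \(|T(r,s)|\) entries \(\lambda^z_{rc}\), each receiving \(\beta_{rs}/|T(r,s)|\). Summing over the \(k\) choices of \(s\), the row sum at \(r\) is \(1\). The column sum at \(s\in S\) equals \(1\) by the symmetric argument.

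Now fix \(c\in C\). Its row receives \(\beta_{rs}/|T(r,s)|\) on \(\lambda^z_{cs}\) for every pair \((r,s)\) with \(c\in T(r,s)\), for a total of exactly \(t(c)\). Adding the loop \(\lambda^z_{cc}=1-t(c)\) gives row sum \(1\). In the same way, the column of \(c\) receives \(t(c)\) from the entries \(\lambda^z_{rc}\), plus the loop, so its sum is also \(1\).

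\emph{Nonnegativity.} The direct masses \(1/q\) are nonnegative, and \(\beta_{rs}\ge0\) because \(k\le q\). The remaining, and only substantive, step is to show \(\lambda^z_{cc}\ge0\), that is, \(t(c)\le1\). We bound every term of \(t(c)\) crudely:
\begin{itemize}
\item \(\beta_{rs}\le 1/k\);
\item \(|T(r,s)|\ge n-4q\) by \eqref{eq:T-coarse};
\item there are at most \(k^2\) pairs \((r,s)\).
\end{itemize}
Hence
\[
 t(c)\le k^2\cdot\frac{1/k}{n-4q}=\frac{k}{n-4q}\le\frac{q}{n-4q}\le1,
\]
where the last inequality is equivalent to \(5q\le n\), which holds because \(q\le n/5\). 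This completes the verification that \(\lambda^z\) is a fractional link matching.
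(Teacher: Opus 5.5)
Your proposal is correct and follows the paper's proof essentially step for step: the support check via $T(r,s)$ and the loops, the row and column sums $\sum_{s\in S}1/k=1$ and $t(c)+(1-t(c))=1$, and the bound $t(c)\le k/(n-4q)\le q/(n-4q)\le 1$ from \eqref{eq:T-coarse} and $5q\le n$. Your per-term count (at most $k^2$ pairs, each contributing at most $\frac{1/k}{n-4q}$) is just a slightly more explicit version of the paper's step ``total residual mass at most $k$.''
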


\begin{proof}
All used entries correspond to arcs of \(D_M\).  For \(r\in R\), the row
sum is
\[
 \sum_{s\in S}\left(\frac{\mathbf1_{\{r\to s\}}}{q}
      +\beta_{rs}\right)=\sum_{s\in S}\frac1k=1.
\]
The same computation gives column sum one for every \(s\in S\).  At a
vertex \(c\in C\), the total mass entering column \(c\) along the first
legs equals the total mass leaving row \(c\) along the second legs, and
both are \(t(c)\).  More explicitly, the first leg of a routed pair
\((r,s)\) contributes its path mass to an entry \(\lambda^z_{rc}\), and
the second leg contributes the same mass to \(\lambda^z_{cs}\).  Hence
column \(c\) and row \(c\) both have non-loop sum \(t(c)\), and
\eqref{eq:loop-fill} makes both of their full sums equal to one.

It remains only to prove \(t(c)\leq1\).  The total residual mass is at most
\[
    \sum_{r\in R,\ s\in S}\frac1k=k.
\]
By \eqref{eq:T-coarse},
\[
    t(c)\leq\frac{k}{n-4q}\leq\frac{q}{n-4q}\leq1,
\]
where the last inequality is equivalent to \(5q\leq n\).  Hence every
entry is nonnegative.
\end{proof}

The direct mass is essential for the global congestion estimate.  We first
record the sharper bound for each of the two-leg roles.

\begin{lemma}[Per-role bound]\label{lem:role}
For fixed \(z\) and \(r\in R\), every first-leg arc \(r\to c\), \(c\in C\),
receives total mass at most \(1/(n-3q)\).  Symmetrically, for fixed \(z\)
and \(s\in S\), every second-leg arc \(c\to s\), \(c\in C\), receives total
mass at most \(1/(n-3q)\).
\end{lemma}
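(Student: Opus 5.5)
The plan is to fix \(z\) and \(r\in R\), and to bound the total mass on a first-leg arc \(r\to c\) by summing over the pairs \((r,s)\) that use \(c\). The point is that the direct mass \(1/q\) and the size of \(T(r,s)\) trade off against each other. By construction the arc \(r\to c\) receives
\[
 \sum_{\substack{s\in S\\ c\in T(r,s)}}\frac{\beta_{rs}}{|T(r,s)|}
 \;\leq\;\frac{\sum_{s\in S}\beta_{rs}}{\min_{s\in S}|T(r,s)|}.
\]
So it suffices to bound the numerator from above and the minimum from below, and to do both in terms of the same parameter.

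Let \(m=|I|=|K|\leq q\), so that \(|C|=n-m-k\). Let \(a\) be the number of \(s\in S\) with \(r\not\to s\), and set \(j=k-a\). For the numerator, the definition of \(\beta\) gives
\[
 \sum_s\beta_{rs}=\frac{a}{k}+j\Bigl(\frac1k-\frac1q\Bigr)=1-\frac{j}{q}.
\]
For the denominator, the observation is that the \(a\) missing out-neighbours of \(r\) lying in \(S\) count against \(r\)'s budget of at most \(q\) missing out-neighbours. Since \(S\cap C=\varnothing\), at most \(q-a\) vertices of \(C\) fail \(r\to c\). The condition \(c\to s\) excludes at most \(q\) further vertices of \(C\), because \(s\) has at most \(q\) missing in-neighbours. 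Hence, using \(m\leq q\),
\[
 |T(r,s)|\geq n-m-k-(q-a)-q\geq n-3q-j .
\]

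It then remains to verify
\[
 \frac{1-j/q}{n-3q-j}\leq\frac1{n-3q}.
\]
For \(j<q\) this is equivalent to \((n-3q)(1-j/q)\leq n-3q-j\), that is, to \(j(n-3q)/q\geq j\), which holds because \(n-3q\geq 2q\geq q\) under \(5q\leq n\). If \(j=q\), then \(k=q\) and \(a=0\), so every \(\beta_{rs}\) vanishes and the arc carries no mass at all. The denominator stays positive in the remaining cases, since \(j\leq k\leq q\) and \(n-4q>0\). The second-leg statement for \(c\to s\) follows by the symmetric argument. There one lets \(a\) count the \(r\in R\) with \(r\not\to s\); these use up \(s\)'s budget of missing in-neighbours, so at most \(q-a\) vertices of \(C\) fail \(c\to s\). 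One then bounds \(\sum_r\beta_{rs}\) and \(\min_r|T(r,s)|\) in the same way.

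The main obstacle I expect is the denominator bound. The coarse estimate \eqref{eq:T-coarse} is too weak here; the proof only works if one notices that every pair routed with the larger residual \(1/k\) (one with \(r\not\to s\)) frees up a slot in \(r\)'s out-deficiency. That simultaneously raises the lower bound on \(|T(r,s)|\) by one, and this exactly compensates the loss in the numerator.
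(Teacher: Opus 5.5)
Your proof is correct and is essentially the paper's argument: your \(j\) is the paper's \(d\), and you reach the same bound \(|T(r,s)|\geq n-3q-j\) by charging the \(k-j\) missing arcs into \(S\) against \(r\)'s out-deficiency. You then use the same residual total \(1-j/q\) and the same final comparison via \(n-3q\geq q\). Your separate treatment of \(j=q\) is unnecessary, since the cross-multiplied inequality already holds for all \(0\leq j\leq q\) because \(n-4q>0\), but it is harmless.
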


\begin{proof}
Write \(h=|I|=|K|\).  Since \(|I\cup K|=h+k\), we have
\begin{equation}\label{eq:C-sharp}
    |C|=n-h-k\geq n-q-k.
\end{equation}
Fix \(r\in R\), and let
\[
    d=|\{s\in S:r\to s\}|.
\]
The vertex \(r\) is missing \(k-d\) out-arcs into \(S\), and it has at most
\(q\) missing out-arcs in total.  It therefore has at most \(q-k+d\)
missing out-arcs into \(C\).  For any fixed \(s\in S\), at most \(q\)
vertices of \(C\) fail the condition \(c\to s\).  By
\eqref{eq:C-sharp},
\begin{equation}\label{eq:T-sharp}
\begin{aligned}
    |T(r,s)|&\geq |C|-(q-k+d)-q\\
             &\geq n-3q-d.
\end{aligned}
\end{equation}
The total residual mass leaving \(r\) is
\[
 d\left(\frac1k-\frac1q\right)+(k-d)\frac1k
 =1-\frac dq.
\]
Consequently, for a fixed first-leg arc $r\to c$,
\[
\lambda^z_{rc}
=
\sum_{\substack{s\in S\\ c\in T(r,s)}}
\frac{\beta_{rs}}{|T(r,s)|}
\leq
\frac{\sum_{s\in S}\beta_{rs}}{n-3q-d}
=
\frac{1-d/q}{n-3q-d}
\leq
\frac{1}{n-3q}.
\]
For \(d=0\) this is equality.  For \(d>0\), the last inequality is
equivalent to \(n-3q\geq q\), which follows from \(q\leq n/5\).
For completeness, fix \(s\in S\) and put
\[
    d'=|\{r\in R:r\to s\}|.
\]
There are \(k-d'\) missing in-arcs to \(s\) from \(R\).  Since \(s\) has
at most \(q\) missing in-arcs altogether, at most \(q-k+d'\) of its
in-arcs from \(C\) are missing.  For each fixed \(r\in R\), at most \(q\)
vertices of \(C\) fail the condition \(r\to c\).  Therefore
\[
    |T(r,s)|\geq |C|-q-(q-k+d')\geq n-3q-d'.
\]
The total residual mass entering \(s\) equals \(1-d'/q\).  Thus, for a fixed second-leg arc $c\to s$,
\[
\lambda^z_{cs}
=
\sum_{\substack{r\in R\\ c\in T(r,s)}}
\frac{\beta_{rs}}{|T(r,s)|}
\leq
\frac{\sum_{r\in R}\beta_{rs}}{n-3q-d'}
=
\frac{1-d'/q}{n-3q-d'}
\leq
\frac{1}{n-3q},
\]
where the last inequality is proved exactly as above.  This establishes
the second-leg assertion.
\end{proof}

\begin{lemma}[Global congestion]\label{lem:global}
The family \((\lambda^z)_{z\in Z}\) satisfies \eqref{eq:congestion}.
\end{lemma}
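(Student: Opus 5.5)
The plan is to fix an ordered pair of distinct vertices \(u,v\in V\) and bound \(\sum_{z\in Z}\lambda^z_{uv}\) by sorting the vertices \(z\) according to the role the arc \(u\to v\) plays in the routing at \(z\). Since \(u\neq v\), loops never occur. The sets \(R,S,C,I\cap K\) partition \(V\), and entries with \(u\in I_z\) or \(v\in K_z\) vanish. So for each \(z\), a nonzero \(\lambda^z_{uv}\) can arise in exactly one of three mutually exclusive ways:
\begin{itemize}
\item \emph{direct}: \(u\in R_z\) and \(v\in S_z\), contributing at most \(1/q\);
\item \emph{first leg}: \(u\in R_z\) and \(v\in C_z\), contributing at most \(1/(n-3q)\) by Lemma~\ref{lem:role};
\item \emph{second leg}: \(u\in C_z\) and \(v\in S_z\), contributing at most \(1/(n-3q)\) by Lemma~\ref{lem:role}.
\end{itemize}
In the \(k=0\) case all off-diagonal entries are zero, so those \(z\) contribute nothing.

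Next I count how many \(z\) can fall into each role, using only the partite degree condition. Let
\[
Z_u=\{z\in Z: y_uz\notin E(G)\}=\{z:u\in K_z\},\qquad
Z'_v=\{z\in Z: x_vz\notin E(G)\}=\{z:v\in I_z\}.
\]
Since \(d_G(y_u,Z)\geq n-q\) and \(d_G(x_v,Z)\geq n-q\), we have \(|Z_u|,|Z'_v|\leq q\). The roles require the following:
\begin{itemize}
\item a direct role requires \(z\in Z_u\cap Z'_v\); let \(m\) be the number of such \(z\) that actually occur;
\item a first-leg role requires \(z\in Z_u\) with \(v\notin I_z\), so \(z\in Z_u\setminus Z'_v\);
\item a second-leg role requires \(z\in Z'_v\setminus Z_u\).
\end{itemize}
Hence, with \(m\leq|Z_u\cap Z'_v|=:m_0\), the number of first- and second-leg occurrences together is at most \((|Z_u|-m_0)+(|Z'_v|-m_0)\leq 2q-2m_0\).

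This gives
\[
\sum_{z}\lambda^z_{uv}\leq \frac{m_0}{q}+\frac{2q-2m_0}{n-3q}.
\]
The right-hand side is linear in \(m_0\in[0,q]\), so it suffices to check the endpoints. At \(m_0=q\) it equals \(1\). At \(m_0=0\) it equals \(2q/(n-3q)\), which is at most \(1\) exactly when \(5q\leq n\), that is, under the hypothesis of Theorem~\ref{thm:main}. (Equivalently, the slope \(1/q-2/(n-3q)\) is nonnegative since \(n-3q\geq 2q\).) This proves \eqref{eq:congestion}.

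The main point needing care is the bookkeeping in the middle step. A direct-role \(z\) must be charged against both \(Z_u\) and \(Z'_v\) simultaneously: that double charge is what lets the large direct mass \(1/q\) be traded against two two-step contributions of \(1/(n-3q)\) each. The argument also relies on the three roles being mutually exclusive for a fixed \(z\), which follows from \(R,S,C\) being disjoint.
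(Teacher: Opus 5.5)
Your proof is correct and is essentially the paper's argument. It uses the same partition of colours into direct, first-leg and second-leg roles, the per-role bound $1/(n-3q)$ from Lemma~\ref{lem:role}, and the double charging of each direct colour against the non-neighbourhoods of both $y_u$ and $x_v$ in $Z$, which gives $D+F\le q$ and $D+H\le q$. The only cosmetic difference is that you parametrize by $m_0=|Z_u\cap Z'_v|\ge D$ and finish by linearity in $m_0$, whereas the paper works with $D$ itself and substitutes $n-3q\ge 2q$ directly.
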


\begin{proof}
Fix a non-loop arc \(u\to v\) of \(D_M\).  Partition the colors \(z\in Z\)
in which this arc can be used into the following three disjoint classes:
\[
\begin{array}{ccl}
 \mathcal D&=&\{z:u\in R_z,\ v\in S_z\},\\
 \mathcal F&=&\{z:u\in R_z,\ v\in C_z\},\\
 \mathcal H&=&\{z:u\in C_z,\ v\in S_z\}.
\end{array}
\]
Let \(D=|\mathcal D|\), \(F=|\mathcal F|\), and \(H=|\mathcal H|\).
For \(z\in\mathcal D\), the arc receives direct mass \(1/q\).  By
Lemma~\ref{lem:role}, in each color of \(\mathcal F\) or \(\mathcal H\) it
receives at most \(1/(n-3q)\).  Therefore
\begin{equation}\label{eq:global-load}
    \sum_{z\in Z}\lambda^z_{uv}
    \leq \frac Dq+\frac{F+H}{n-3q}.
\end{equation}
There are no further uses of \(u\to v\) in the construction: a non-loop
entry is either direct, a first leg, or a second leg, while the final fill
in \eqref{eq:loop-fill} uses loops only.

If \(z\in\mathcal D\cup\mathcal F\), then \(u\in R_z\subseteq K_z\), so
\(y_uz\notin E(G)\).  The vertex \(y_u\) has at most \(q\) non-neighbors
in \(Z\), and hence
\[
    D+F\leq q.
\]
Likewise, if \(z\in\mathcal D\cup\mathcal H\), then
\(v\in S_z\subseteq I_z\), so \(x_vz\notin E(G)\), and
\[
    D+H\leq q.
\]
Thus \(F+H\leq2q-2D\).  Since \(n-3q\geq2q\), \eqref{eq:global-load}
gives
\[
    \sum_{z\in Z}\lambda^z_{uv}
    \leq\frac Dq+\frac{2q-2D}{2q}=1.
\]
\end{proof}

\begin{proof}[Proof of Theorem~\ref{thm:main}]
Set \(q=n-\whdelta(G)\).  If \(q=0\), take the loop link matching at every
\(z\).  If \(q>0\), Lemma~\ref{lem:local} constructs a fractional link
matching for every \(z\), and Lemma~\ref{lem:global} bounds every
off-diagonal aggregate load by one.  Lemma~\ref{lem:criterion} completes
the proof.
\end{proof}

\begin{remark}\label{rem:barrier}
The constant \(1/5\) is the exact point at which this routing estimate
closes.  In the extremal count \(D=0\), an arc may have \(q\) first-leg
roles and \(q\) second-leg roles, each bounded by \(1/(n-3q)\); their total
is \(2q/(n-3q)\), which equals one when \(q=n/5\).  This is a barrier for
the present certificate, not an assertion that Theorem~\ref{thm:main}
fails below partite minimum degree \(4n/5\).
\end{remark}

\section{Completion of partial Latin squares}\label{sec:completion}

\begin{proof}[Proof of Corollary~\ref{cor:latin}]
Let \(P\) be an \(\varepsilon\)-dense partial Latin square of order \(n\).
Use vertex classes \(R,C,S\) for its rows, columns, and symbols.  Every
filled cell \((r,c)\) containing symbol \(s\) gives the triangle \(rcs\)
in \(K_{n,n,n}\), and the partial Latin property makes these prescribed
triangles edge-disjoint.  Delete all their edges and call the resulting
tripartite graph \(G_P\).

At a row vertex, the number of deleted row--column edges equals the number
of deleted row--symbol edges.  The analogous statement holds at every
column and every symbol.  Thus \(G_P\) is triangle-divisible.  Since \(P\)
is \(\varepsilon\)-dense,
\[
    \whdelta(G_P)\geq(1-\varepsilon)n.
\]

Let \(\widehat\delta^*_{K_3}\) denote the asymptotic fractional
decomposition threshold for triangle-divisible balanced tripartite graphs,
in the notation of Barber, K\"uhn, Lo, Osthus and
Taylor~\cite{BarberKuhnLoOsthusTaylor}.  Theorem~\ref{thm:main} gives
\[
    \widehat\delta^*_{K_3}\leq\frac45.
\]
Their Corollary~1.6 states that, for every fixed \(\eta>0\), every
sufficiently large triangle-divisible balanced tripartite graph \(H\) with
equal part sizes and
\[
    \whdelta(H)\geq
    \bigl(\widehat\delta^*_{K_3}+\eta\bigr)n
\]
has a triangle decomposition.

Choose $\eta>0$ so that $4/5+\eta<1-\varepsilon$. Then
\[
    \widehat{\delta}(G_P)
    \geq (1-\varepsilon)n
    >
    \left(\frac45+\eta\right)n
    \geq
    \left(\widehat{\delta}_{K_3}^{*}+\eta\right)n.
\]
Thus $G_P$ satisfies the required degree condition. For sufficiently
large $n$, it therefore has a triangle decomposition.
Adding back the prescribed triangles partitions all edges of
\(K_{n,n,n}\) into triangles, which is a Latin square extending \(P\).
\end{proof}

\end{document}